\documentclass[12pt,a4paper]{article}
\usepackage{amsmath,amssymb,amsthm,mathtools}
\usepackage[colorlinks=true,linkcolor=blue,urlcolor=blue,citecolor=blue]{hyperref}

\newtheorem{theorem}{Theorem}[section]
\newtheorem{lemma}[theorem]{Lemma}

\newtheorem{conjecture}[theorem]{Conjecture}
\theoremstyle{remark}\newtheorem{remark}[theorem]{Remark}

\DeclareMathOperator{\cdim}{cd}

\title{Finite covers of a product of surfaces \\
with  bounded rank and arbitrarily large systole}
\author{Koji Fujiwara
\thanks{supported by JSPS Grant-in-Aid for Scientific Research 25H00588}
}
\date{}

\begin{document}\maketitle

\begin{abstract}
We construct finite covers of a fixed product of two closed hyperbolic surfaces of genus two whose fundamental groups are generated by at most fifteen elements and whose injectivity radii tend to infinity. The construction uses fibre products over finite groups.
 These covers are closed aspherical four-manifolds with universal cover $\mathbb H^2\times\mathbb H^2$. 
 They give counterexamples to a conjecture of Avramidi and Delzant for symmetric spaces of higher rank, in the case \(\mathbb H^2\times\mathbb H^2\).
\end{abstract}

\section{Introduction}

For an isometric action by $G$ on a metric space $Y$, we define the {\it minimal displacement} by
\[
 \operatorname{disp}_Y(G)=\inf\{d_Y(y,gy):y\in Y,\ 1\ne g\in G\}.
\]
We define the {\it translation length} of $g$ by   $\ell_Y(g)=\inf_y d_Y(y,gy)$.
We denote by \(d(G)\) the {\it rank} of \(G\), that is, the minimal number of generators of \(G\).

Avramidi and Delzant proved the following theorem. 
\begin{theorem}[Avramidi--Delzant {\cite[Corollary~34]{AD1}}]
Suppose that $G$  acts on a $\delta$-hyperbolic space $Y$ isometrically, and 
$d(G) \le n$. If
\[
\operatorname{disp}_Y(G)>
100\delta\log_2((n+1)!),
\]
then $G$ is free.
\end{theorem}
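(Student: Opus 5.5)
We would prove the statement by induction on the number of generators, using a Nielsen-type reduction adapted to $\delta$-hyperbolic geometry. This is the route of Avramidi--Delzant, and essentially the same mechanism as in the proof of the Tits alternative for small generating sets in hyperbolic groups.

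\emph{Setup.} Fix a generating tuple $S=(s_1,\dots,s_n)$ of $G$ and a basepoint $y\in Y$. Among all tuples Nielsen-equivalent to $S$ and all basepoints, choose one that nearly minimises the total displacement $\sum_i d(y,s_iy)$. Because the displacement of every nontrivial element exceeds $L:=100\delta\log_2((n+1)!)$, each generator moves $y$ by more than $L$, and the same holds for every nontrivial word.

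\emph{Dichotomy.} We compare the geodesic segments $[y,s_iy]$ and $[y,s_i^{-1}y]$ through their Gromov products at $y$, and there are two cases.
\begin{itemize}
\item[(a)] All Gromov products $(s_i^{\epsilon}y\,|\,s_j^{\eta}y)_y$, for distinct pairs $(i,\epsilon)\neq(j,\eta)$, are small compared to the displacements, say less than half of each minus $O(\delta)$. Then the standard ping-pong or local-geodesic argument shows that every reduced word in the $s_i$ gives a $(k,C\delta)$-local geodesic in $Y$. Such a local geodesic is a quasi-geodesic, so nontrivial reduced words act nontrivially and $G$ is free on $S$.
\item[(b)] Some Gromov product is large, for instance $(s_iy\,|\,s_jy)_y$ is at least half of $\min(d(y,s_iy),d(y,s_jy))$. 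Then replacing $s_j$ by $s_i^{-1}s_j$ (a Nielsen move) decreases the total displacement by a definite amount. This contradicts minimality, unless the overlap is so long that we get an element with small displacement, which contradicts the hypothesis on $\operatorname{disp}_Y(G)$.
\end{itemize}

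\emph{The subtle case and the induction.} The hard situation is intermediate: the overlaps are of size comparable to $\delta$ times the displacement scale, and Nielsen moves do not strictly reduce. Here we use induction. We find a subgroup generated by fewer elements, for instance $\langle s_1,\dots,s_{n-1}\rangle$ or a conjugate, moving the basepoint to a point adapted to it. By induction this subgroup is free with a controlled "thin" configuration, and we then attach the last generator. The factor $(n+1)!$, equivalently the $\log_2((n+1)!)=\sum_{k\le n+1}\log_2 k$ term, suggests that at each step from $k-1$ to $k$ generators we lose a factor $\log_2 k$ in the threshold. This loss can be absorbed by a pigeonhole argument over $k$ generators, choosing among $k$ candidate directions at scale $\delta\log_2 k$. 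So the inductive statement should carry a quantitative conclusion, such as "there is a point $y$ at which the generators form a $100\delta\log_2((n+1)!)$-separated Schottky configuration".

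\emph{Expected main obstacle.} The main obstacle is making the inductive bookkeeping precise: showing that, at the chosen scale, one either gets a Schottky configuration or a Nielsen move that lowers a suitably weighted complexity. One also has to check that the infimum is achieved or approximated well enough in a possibly non-proper $Y$. I would first try minimising the lexicographically ordered vector of displacements rather than their sum.
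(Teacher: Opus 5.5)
The paper gives no proof of this statement. It is quoted from \cite[Corollary~34]{AD1} as motivation, so there is nothing in the paper to compare your route with, nor anything confirming that it is Avramidi--Delzant's. Your sketch therefore has to stand on its own, and it does not: the step you call ``the subtle case'' is the actual content of the theorem, and it is left open. Suppose $(y,S)$ minimises $\sum_i d(y,s_iy)$ up to an error $\tau$. When $d(y,s_iy)\le d(y,s_jy)$, the Nielsen move $s_j^{\eta}\mapsto s_i^{-\epsilon}s_j^{\eta}$ changes the total by exactly $d(y,s_iy)-2(s_i^{\epsilon}y\,|\,s_j^{\eta}y)_y$. So the failure of (b) only yields
\[
(s_i^{\epsilon}y\,|\,s_j^{\eta}y)_y\le\tfrac12\min\bigl(d(y,s_iy),d(y,s_jy)\bigr)+\tfrac{\tau}{2}\qquad(i\ne j).
\]
The local-geodesic argument in (a), however, needs the bound $\tfrac12\min(\cdots)-C\delta$. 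That margin is what lets each segment of the broken geodesic $y,\,t_1y,\,t_1t_2y,\dots$ (for a reduced word $t_1\cdots t_m$ in $S^{\pm1}$) keep a piece of length $\gg\delta$ after the cancellations at both of its ends. The band between the two thresholds has absolute width of order $\delta$, independent of the displacement scale, so large displacement does not make it disappear. It also genuinely occurs: already for $\delta=0$, where it shrinks to the equality case, take the basis $\{ab,ab^{-1}\}$ of a subgroup of $F(a,b)$ acting on its Cayley tree with $y=1$. The total displacement is minimal there, and the Gromov product is exactly half a displacement. In trees, Nielsen's method handles this boundary case with the triple condition $|uvw|>|u|-|v|+|w|$ and a tie-breaking well-order on half-words, and neither has an evident coarse analogue. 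The hypothesis $\operatorname{disp}_Y(G)>L$ does control the pairs $(s_i,s_i^{-1})$, which no Nielsen move touches, via $(s_iy\,|\,s_i^{-1}y)_y\le\tfrac12\bigl(d(y,s_iy)-\operatorname{disp}_Y(G)\bigr)+O(\delta)$. It gives no direct control of the band for $i\ne j$, so a genuinely new mechanism is needed there. Kapovich and Weidmann did prove the qualitative statement, with an inexplicit constant $C(n)\delta$, by a Nielsen-type method, but their argument is far more elaborate than this dichotomy.

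Your inductive paragraph does not close this gap. Freeness of $\langle s_1,\dots,s_{n-1}\rangle$, which is all the theorem itself provides, says nothing about how $s_n$ interacts with that subgroup. To ``attach the last generator'' you would need two things, and you formulate neither:
\begin{itemize}
\item a strengthened, geometric inductive statement, for example that after Nielsen moves and deletion of trivial entries the tuple forms a $K_k\delta$-Schottky configuration at some point of $Y$;
\item a combination lemma showing that this property passes from $k-1$ to $k$ entries at a cost of $O(\delta\log_2 k)$.
\end{itemize}
The ``pigeonhole over $k$ candidate directions'' is read off from the shape $\log_2((n+1)!)=\sum_{k\le n+1}\log_2 k$ of the constant rather than derived, so the threshold is not justified either. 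Two smaller points also need repair. In (b), a Gromov product $\ge\tfrac12\min$ only gives $d(y,s_i^{-1}s_jy)\le\max\bigl(d(y,s_iy),d(y,s_jy)\bigr)$, not a definite decrease. And a Nielsen move can produce the identity, which does not contradict the displacement hypothesis. So entries must be allowed to drop out, and the conclusion is that $G$ is free, not that $S$ is a free basis.
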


Sufficiently large displacement in a hyperbolic space forces the strong algebraic conclusion that the group is free.

The theorem applies to rank-one symmetric spaces, which are $\delta$-hyperbolic.
They conjecture a higher-rank analogue \cite[Conjecture~4]{AD2}.
 We state the following group-theoretic
consequence of their conjecture.
 Indeed, as explained in \cite{AD2}, applying Conjecture 4 there to the
augmentation ideal of $K[G]$ gives the stated bound on
$\operatorname{cd}_K G$.

\begin{conjecture}[Avramidi--Delzant, group-theoretic form]\label{conj-AD} Let $X$ be a symmetric space with curvature $-1 \le \kappa \le 0$  of real rank $r$. For every $n$ there exists a constant $f(n)$ with the following property:
Let $G$ be a group with $d(G) \le n$.
Suppose $G$ acts on $X$ isometrically such that  \[ \operatorname{disp}_X(G)>f(n). \] Then, 
$\operatorname{cd}_K G\le r $ for every field $K$.
\end{conjecture}

 They specifically mention $X=\mathbb H^2\times\mathbb H^2$, which is a symmetric space of real rank two with curvature $-1 \le \kappa  \le 0$. We give a counterexample in this case.

\bigskip

\begin{theorem}\label{main}
Let $S$ be a closed oriented hyperbolic surface of genus two, with curvature $-1$. Let \(A\) and \(B\) be two copies of the fundamental group of \(S\). 
% We put the product metric on $X$. 
% The sectional curvatures of $X$ lie in $[-1,0]$.
Let
$
X=\mathbb H^2\times\mathbb H^2
$ with the product metric.
$A\times B$ acts on $X$ isometrically such that $S\times S=(A\times B)\backslash X$. 

For every $R>0$ there exists a finite-index subgroup $H_R<A\times B$ such that
\[
 d(H_R)\le15,\qquad \operatorname{disp}_{X}(H_R) >R
 %\ell_X(h)>R\quad\text{for every }1\ne h\in H_R.
\]
The corresponding connected finite cover
$M_R=H_R\backslash X$ of $S\times S$
is a closed oriented aspherical four-manifold, and
$
\operatorname{cd}_K H_R=4
$
for every field $K$.

\end{theorem}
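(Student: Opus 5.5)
The construction will be a fibre product over a finite group. Since $A$ is residually finite, there is a finite quotient in which the images of all nontrivial elements of short translation length survive. Concretely, fix a basepoint and let $F_R\subset A$ be the finite set of nontrivial elements $a$ with $d(x_0,ax_0)\le 2R$; the same set works for $B$ after identifying $B$ with $A$. I will choose a finite group $Q$ and surjections $\alpha\colon A\to Q$ and $\beta\colon B\to Q$ such that $\alpha(a)\ne1$ and $\beta(b)\ne 1$ for every $a,b\in F_R$. Then I set
\[
H_R=\{(a,b)\in A\times B:\ \alpha(a)=\beta(b)\}.
\]
This is a finite-index subgroup of index $|Q|$.

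For the displacement, take $1\ne(a,b)\in H_R$. For every $y=(y_1,y_2)$ we have $d_X(y,(a,b)y)^2=d(y_1,ay_1)^2+d(y_2,by_2)^2$. If $a\ne1$ and $b=1$, then $\alpha(a)=1$, so $a\notin F_R$. The same argument applies when $a=1$. If both $a$ and $b$ are nontrivial but one of them lies in $F_R$, this alone gives no bound, since $\alpha(a)=\beta(b)$ can then hold with $a\in F_R$. So the quotients must also be chosen so that $\alpha(F_R)\cap\beta(F_R)$ consists only of pairs that are jointly large. The cleanest fix is to take $\alpha$ and $\beta$ that kill both $F_R$-images in a controlled way: choose $Q=Q_1\times Q_2$, with $\alpha=(\alpha_1,\alpha_2)$ injective on $F_R\cup\{1\}$ in the first coordinate and $\beta$ injective in the second, and arrange that $\alpha_2(A)$ and $\beta_1(B)$ behave suitably. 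Since nontrivial elements of a surface group act as hyperbolic isometries, the translation length is bounded below by the injectivity radius, and a finite cover of $S$ with large injectivity radius exists by residual finiteness. The simplest sufficient choice is to require $\alpha(a)\neq\beta(b)$ whenever one of $a,b$ lies in $F_R\cup\{1\}$ and the pair is nontrivial. Since $\alpha^{-1}(1)$ and $\beta^{-1}(1)$ are normal subgroups of large systole, this reduces to having $\alpha(F_R)$ and $\beta(F_R)$ disjoint in $Q$. I will try $Q=Q_0\times Q_0$ with $\alpha=(\phi,1)$ and $\beta=(1,\phi)$, but that fibre product is just $\ker\times\ker$, whose rank grows. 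So the real point is balancing displacement against rank.

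For the rank bound I use the standard fibre-product estimate. $H_R$ contains $N_A\times N_B$, where $N_A=\ker\alpha$ and $N_B=\ker\beta$, and it is generated by lifts $(a_i,b_i)$ of generators of $A$ paired with elements satisfying $\beta(b_i)=\alpha(a_i)$, together with generators of $N_B$ normally generated within $H_R$. If $Q$ is generated by few elements and $\ker\beta$ is normally generated in $B$ by few elements, we get a bounded count. For example: $4$ lifts of the generators of $A$, $4$ for $B$, plus a bounded number of normal generators of the kernels, using that $Q$ has a presentation with boundedly many relators relative to the surface group, giving $\le 15$. For this I will take $Q$ to be a quotient of $A$ with a uniformly bounded number of relations beyond the surface relation, such as $Q=\mathrm{PSL}_2(\mathbb F_p)$, which is $2$-generated with bounded presentations. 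The maps $\alpha$ and $\beta$ come from Zariski-dense representations of $A$ reduced mod $p$; the strong approximation theorem gives surjectivity, and injectivity on $F_R$ holds for large $p$. For the off-diagonal condition $\alpha(a)\ne\beta(b)$ with $a\in F_R$, I will choose $\beta=\alpha\circ\theta$ with $\theta$ a twisting so that traces mismatch; trace comparison mod $p$ shows that short elements cannot match any element in the image of short or long elements except through an identity of traces in characteristic zero.

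This step is the main obstacle: controlling mixed pairs $(a,b)$ where $a$ is short and $b$ is long yet $\alpha(a)=\beta(b)$. Such a pair has displacement at least $\ell(b)$, which is large when $b$ is long. So, more simply, the real condition is only that $\alpha(a)=\beta(b)$ with both $a,b\in F_R\cup\{1\}$ forces $a=b=1$. That is a finite condition, achievable by choosing $p$ large and non-conjugate representations. The final claims are then immediate. $M_R$ is a finite cover of $S\times S$, so it is closed, orientable (taking the orientation-preserving action) and aspherical with universal cover $X$. Hence $H_R$ is a Poincaré duality group of dimension $4$, so $\operatorname{cd}_K H_R=4$ for every field $K$, because $H^4(M_R;K)\cong K\neq0$.
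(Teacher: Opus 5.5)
\textbf{Main gap: the condition you reduce to is not conjugacy-invariant.} Translation length is a conjugacy invariant, but your set $F_R$ is defined by displacement of a fixed basepoint $x_0$. Moreover, $H_R$ is not invariant under conjugating the two coordinates independently.

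If $1\ne(a,b)\in H_R$ has $\ell_X(a,b)\le R$, then $a=sus^{-1}$ and $b=tvt^{-1}$ with $u,v$ short representatives. But $(s,t)$ need not lie in $H_R$, so $\alpha(a)=\beta(b)$ only tells you that $\alpha(u)$ and $\beta(v)$ are \emph{conjugate} in $Q$. The condition you need is the paper's Lemma~\ref{finite-input}(1): for representatives $u\in E_A\cup\{1\}$, $v\in E_B\cup\{1\}$ of short conjugacy classes with $(u,v)\ne(1,1)$, the elements $\alpha(u)$ and $\beta(v)$ are not conjugate in $Q$.

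Your condition (that $\alpha(a)=\beta(b)$ with $a,b\in F_R\cup\{1\}$ forces $a=b=1$) is strictly weaker and does not suffice. Here is a counterexample:
\begin{itemize}
\item Identify $B$ with $A$.
\item Pick $g\in A$ with $g^{-1}x_0$ far from the axes of all elements of $F_R$, so that $gF_Rg^{-1}\cap(F_R\cup\{1\})=\emptyset$.
\item Pick $\alpha$ whose kernel contains no nontrivial element moving $x_0$ by at most $4R+2d(x_0,gx_0)$.
\item Put $\beta(b)=\alpha(gbg^{-1})$.
\end{itemize}
Then $\alpha(a)=\beta(b)$ with $a,b\in F_R\cup\{1\}$ forces $a=gbg^{-1}$, hence $a=b=1$, so your condition holds. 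Yet $(gug^{-1},u)\in H_R$ for every $u$, and $\ell_X(gug^{-1},u)=\sqrt2\,\ell_{\mathbb H^2}(u)$. Hence $\operatorname{disp}_X(H_R)\le\sqrt2\operatorname{sys}(S)$ no matter how large $R$ is. Your sentence ``such a pair has displacement at least $\ell(b)$, which is large when $b$ is long'' makes the same conflation: $b\notin F_R$ says nothing about $\ell_{\mathbb H^2}(b)$.

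\textbf{Secondary gap: no construction of $\alpha,\beta$.} Your mention of trace comparison points at the right tool. The quantity $\operatorname{tr}(\cdot)^2$ is a conjugacy invariant on $\mathrm{PSL}_2(\mathbb F_p)$, and the paper gets non-conjugacy exactly by forcing $\operatorname{tr}(U)^2\ne\operatorname{tr}(V)^2$. But you never produce $\alpha,\beta$ achieving this.

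With $\beta=\alpha\circ\theta$, the whole difficulty is the choice of $\theta$. Any $\theta$ fixing the conjugacy class of some $c$ with $\sqrt2\,\ell_{\mathbb H^2}(c)\le R$ puts some $(hch^{-1},c)$ into $H_R$; for instance, a Dehn twist about such a $c$ does this. So you would need a $\theta$ sending every short class far away, say a high power of a pseudo-Anosov. You would then also have to show that reduction mod $p$:
\begin{itemize}
\item keeps all the relevant squared traces distinct,
\item keeps $\alpha(u)\ne1\ne\beta(v)$ for nontrivial short $u,v$,
\item is onto the same group $\mathrm{PSL}_2(\mathbb F_p)$, rather than some $\mathrm{PSL}_2(\mathbb F_{p^k})$.
\end{itemize}
The paper avoids all of this. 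It uses full residual freeness to get maps onto $F_2$ that are nontrivial on $E_A$ and $E_B$. Schottky pairs show that the polynomials $T_{r_A(u)}(\mathbf X)-T_{r_B(v)}(\mathbf Y)$ are nonzero. A zero count mod $p$, combined with the Kantor--Lubotzky/Eberhard--Virchow generation theorem, then gives random generating pairs $\mathbf X,\mathbf Y$ avoiding all these zeros.

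\textbf{Minor gap: the bound $15$.} Your ``$\le 15$'' is asserted, not derived. It needs Sunday's three-relator presentation of $\mathrm{PSL}_2(\mathbb F_p)$ and a Tietze change to the four generators $\alpha(a_i)$. This makes $\ker\alpha$ normally generated by $3+4=7$ elements, giving $4+4+7=15$.
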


Since \(X\) is a Hadamard manifold,
\[
\operatorname{inj}(M_R)
=\frac12\operatorname{sys}(M_R)
=\frac12\operatorname{disp}_X(H_R)
>R/2.
\]
Thus these covers have injectivity radii tending to infinity.

\bigskip
The construction extends to any fixed number of surface factors; see
Theorem~\ref{prop:higher-products}. For every fixed \(q\ge 2\), it gives finite covers
\(M_R^{(q)}\) of \(S^q\) whose injectivity radii tend to infinity while
the ranks of their fundamental groups remain uniformly bounded.

The construction of $H_R$ is a fibre product.
Given epimorphisms $\alpha:A\twoheadrightarrow Q$ and
$\beta:B\twoheadrightarrow Q$, the {\it fibre product} is 
defined by
\[
A\times_Q B
 =\{(a,b)\in A\times B:\alpha(a)=\beta(b)\}.
\]
If $Q$ is finite, then $A\times_Q B$ has index $|Q|$ in $A\times B$.

For a given \(R>0\), let \(E_A\subset A\setminus\{1\}\) and \(E_B\subset B\setminus\{1\}\) contain one representative of each nontrivial conjugacy class with translation length at most \(R\). These sets are finite.
The following lemma gives an algebraic formulation of what we need.

\begin{lemma}[Finite-quotient lemma]\label{finite-input}
Let $A,B$ be genus-two surface groups, and let
$E_A\subset A\setminus\{1\}$ and $E_B\subset B\setminus\{1\}$ be finite.
There exist a finite group $Q$ and epimorphisms
\[
\alpha:A\twoheadrightarrow Q,\qquad \beta:B\twoheadrightarrow Q
\]
with the following two properties:
\begin{enumerate}
\item For every $u\in E_A\cup\{1\}$ and $v\in E_B\cup\{1\}$ with $(u,v)\neq(1,1)$, the elements $\alpha(u)$ and $\beta(v)$ are not conjugate in $Q$.
\item The group $Q$ admits a finite presentation with at most three relators.
\end{enumerate}
\end{lemma}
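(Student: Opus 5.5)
The plan is to take $Q=\mathrm{PSL}_2(\mathbb F_p)$ for a suitably large prime $p$, to build $\alpha$ and $\beta$ by reducing two characteristic-zero representations modulo $p$, and to separate conjugacy classes by traces. Property~2 then holds by the classical fact (Sunday; Campbell--Robertson) that $\mathrm{PSL}_2(\mathbb F_p)$ has a presentation on two generators with at most three relators. So everything reduces to arranging property~1 together with surjectivity.

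\emph{Step 1: pass to a free group.} Genus-two surface groups are residually free (Baumslag), and $A$ surjects onto $F_2$. Hence, replacing a given epimorphism by its composite with a suitable automorphism or product map, we get epimorphisms $\phi_A:A\twoheadrightarrow F_2$ and $\phi_B:B\twoheadrightarrow F_2$. We want $\phi_A$ nontrivial on every $u\in E_A$ and $\phi_B$ nontrivial on every $v\in E_B$. Residual freeness gives, for each element separately, a map to a free group that kills it not; because there are finitely many elements we need one map working for all of $E_A$ at once, and this is the step to check carefully. If a single epimorphism onto $F_2$ is awkward, the fallback is to land in a finitely generated Zariski-dense free subgroup, which is all the later steps use.

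\emph{Step 2: choose purely hyperbolic representations.} Fix an arithmetic cocompact Fuchsian group $\Gamma\subset \mathrm{SL}_2(\mathcal O_K)$ coming from a quaternion algebra over a number field $K$. Take a Schottky free subgroup of $\Gamma$, and choose two faithful representations $\rho,\rho':F_2\to\Gamma$, both with Zariski-dense image. Since $\Gamma$ is cocompact, every nontrivial element is hyperbolic, so no nontrivial element has trace $\pm2$. Now consider the finite sets
\[
T_A=\{\operatorname{tr}\rho\phi_A(u):u\in E_A\},\qquad T_B=\{\operatorname{tr}\rho'\phi_B(v):v\in E_B\}.
\]
We want $T_A\cap(\pm T_B)=\emptyset$. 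To get this, vary $\rho'$ in a one-parameter family, for instance by conjugating one generator's image by powers of a fixed hyperbolic element. Each trace is then a nonconstant function of the parameter, so we can choose $\rho'$ generically to avoid the finitely many forbidden values.

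\emph{Step 3: reduce modulo $p$.} Pick a prime $\mathfrak p$ of $\mathcal O_K$ with residue field $\mathbb F_p$ (a split prime) and with $p$ large, and set
\[
\alpha=\pi\circ\rho\circ\phi_A,\qquad \beta=\pi\circ\rho'\circ\phi_B,
\]
where $\pi$ is reduction $\mathrm{SL}_2(\mathcal O_K)\to\mathrm{SL}_2(\mathbb F_p)\to\mathrm{PSL}_2(\mathbb F_p)$. By strong approximation for Zariski-dense subgroups (Matthews--Vaserstein--Weisfeiler), both maps are surjective for almost all such $\mathfrak p$. For $p$ large, the finitely many nonzero algebraic integers
\[
t-2,\quad t+2,\quad t-s,\quad t+s,\qquad t\in T_A,\ s\in T_B,
\]
all remain nonzero modulo $\mathfrak p$.
\begin{itemize}
\item Two conjugate elements of $\mathrm{PSL}_2(\mathbb F_p)$ have traces agreeing up to sign. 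So for $u\in E_A$ and $v\in E_B$, the images $\alpha(u)$ and $\beta(v)$ are not conjugate.
\item The identity has trace $\pm2$, while no reduced trace equals $\pm2$. So $\alpha(u)\neq1$ and $\beta(v)\neq1$, which covers the pairs with $u=1$ or $v=1$.
\end{itemize}
This gives property~1.

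I expect the main obstacle to be Step~2 together with the genericity argument. We need purely hyperbolic, Zariski-dense free subgroups defined over a ring with split primes, and we need to certify that each trace really varies along the chosen deformation. The latter is where the absence of parabolics, guaranteed by cocompactness, is used. If this proves delicate, the first alternative to try is a second prime: choose $p$ so that the reduced trace sets are disjoint even when the characteristic-zero traces coincide. However, this is weaker, because equal characteristic-zero traces cannot be separated modulo any prime, so the deformation argument is really needed.
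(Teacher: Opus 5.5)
Your overall architecture matches the paper: full residual freeness to reach $F_2$, trace comparison in $\mathrm{PSL}_2(\mathbb F_p)$, and the three-relator presentation. However, there is a genuine gap in Step~2, the characteristic-zero separation $T_A\cap(\pm T_B)=\emptyset$, which carries all the weight. It rests on a false claim.

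If $\rho'_n$ is obtained by conjugating the image of $x_1$ by $h^n$, then $\operatorname{tr}\rho'_n(w)$ is independent of $n$ for every $w$ conjugate into $\langle x_1\rangle$ or $\langle x_2\rangle$. These traces do not move at all, and nothing in your setup prevents a coincidence among them. Concretely, suppose $\phi_A(u)=\phi_B(v)=x_1$ and $\rho=\rho'$, which you allow. Then $\beta(v)=\pi(h^n)\,\alpha(u)\,\pi(h^n)^{-1}$ for every $n$ and every prime, so property~1 fails.

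As you observe yourself, a coincidence of characteristic-zero traces can never be undone by the choice of $\mathfrak p$. So this is exactly where an argument is needed. Cocompactness and the absence of parabolics play no role here; they only keep traces away from $\pm2$. For words involving both generators, nonconstancy along your curve is also only asserted. It can be checked for $h$ in general position from the coefficient of the top power of $\lambda^{2n}$, but the words conjugate into $\langle x_1\rangle$ or $\langle x_2\rangle$ genuinely stay fixed.

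The missing idea is to choose the two representations generically in the full parameter space, not along a one-parameter curve through a fixed $\rho'$. The pairs $\mathbf X=(\rho(x_1),\rho(x_2))$ and $\mathbf Y=(\rho'(x_1),\rho'(x_2))$ are independent variables. Hence each $F_{u,v}(\mathbf X,\mathbf Y)=\operatorname{tr}(\phi_A(u)(\mathbf X))^2-\operatorname{tr}(\phi_B(v)(\mathbf Y))^2$, including the cases $u=1$ or $v=1$, is a nonzero polynomial on $\mathrm{SL}_2^4$. To see this, put a Schottky pair in one slot and $(I,I)$ in the other. In your framework you could then take $(\mathbf X,\mathbf Y)$ in the Zariski-dense set $\Gamma^4$, off these zero sets and off finitely many further proper closed sets forcing Zariski-dense image, and then proceed to Step~3.

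The paper does this directly over $\mathbb F_p$. A point count on the big cell shows that a random point of $\mathrm{SL}_2(\mathbb F_p)^4$ avoids the reduced zero sets. The Kantor--Lubotzky random generation theorem then makes both pairs generate $\mathrm{SL}_2(\mathbb F_p)$. This needs no arithmetic groups, split primes or strong approximation.

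Two smaller points:
\begin{enumerate}
\item A cocompact arithmetic group coming from a division algebra over $K$ cannot lie in $\mathrm{SL}_2(K)$, since its $K$-span would be $M_2(K)$. You must pass to a splitting field and use primes of it with residue field $\mathbb F_p$.
\item Step~1 is exactly Baumslag's full residual freeness, which is what the paper cites. It does give a single epimorphism $A\twoheadrightarrow F_2$ nontrivial on all of $E_A$.
\end{enumerate}
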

Applying the lemma to these sets and putting \(H_R=A\times_Q B\), property (1) excludes nontrivial elements of \(H_R\) with translation length at most \(R\), while property (2) gives a uniform bound on \(d(H_R)\).
The specific bound three is not important; any uniform bound would suffice. The proof is given in Section~\ref{finiteproof}.

We also record in the appendix a sharper general bound for the rank of a fibre product, which may be useful elsewhere.

\bigskip

\section{Generator bounds of fibre products}\label{construction}
%\subsection{Finite quotient with conjugacy separation}

We record two elementary lemmas that will be used to bound the number of generators of the fibre products appearing in the construction.

\begin{lemma}[Generator bound for a fibre product]\label{fiber-rank}
Let $\alpha:A\twoheadrightarrow Q$ and $\beta:B\twoheadrightarrow Q$ be epimorphisms. Suppose $A$ and $B$ are generated by at most $a$ and $b$ elements, respectively, and $\ker\alpha$ is normally generated in $A$ by at most $c$ elements. Then
\[
 d(A\times_Q B)\le a+b+c.
\]
\end{lemma}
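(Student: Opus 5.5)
The plan is to produce an explicit generating set for $H=A\times_Q B$ from three pieces: lifts of generators of $B$, generators of $A$ paired with suitable elements of $B$, and the normal generators of $\ker\alpha$ placed in the first factor. Fix generators $x_1,\dots,x_a$ of $A$, $y_1,\dots,y_b$ of $B$, and elements $k_1,\dots,k_c\in A$ whose normal closure in $A$ is $\ker\alpha$. Since $\beta$ and $\alpha$ are surjective, choose for each $i$ an element $\tilde x_i\in B$ with $\beta(\tilde x_i)=\alpha(x_i)$, and for each $j$ an element $\tilde y_j\in A$ with $\alpha(\tilde y_j)=\beta(y_j)$. The candidate generating set of $H$ is
\[
(x_i,\tilde x_i)\ (1\le i\le a),\qquad (\tilde y_j,y_j)\ (1\le j\le b),\qquad (k_l,1)\ (1\le l\le c).
\]
All of these lie in $H$ by construction. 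Let $H_0$ be the subgroup they generate.

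First I will show that $H_0$ contains $\ker\alpha\times 1$. For this it suffices to check that $H_0$ contains every conjugate $(a k_l a^{-1},1)$ with $a\in A$. Write $a$ as a word in the $x_i$, and let $g\in H_0$ be the same word in the $(x_i,\tilde x_i)$. Then $g=(a,b')$ for some $b'\in B$, and
\[
g(k_l,1)g^{-1}=(ak_la^{-1},1).
\]
So $H_0\cap(A\times 1)$ contains all conjugates of the $k_l$, and hence contains their normal closure $\ker\alpha\times 1$.

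Next I will show that $H_0=H$. Take $(a,b)\in H$. Write $b$ as a word in the $y_j$, and let $h\in H_0$ be the same word in the $(\tilde y_j,y_j)$. Then $h=(a',b)$ for some $a'\in A$. Since both $(a,b)$ and $h$ lie in $H$, we have $\alpha(a')=\beta(b)=\alpha(a)$. Therefore
\[
(a,b)h^{-1}=(aa'^{-1},1)\in\ker\alpha\times 1\subset H_0,
\]
so $(a,b)\in H_0$. This proves $d(H)\le a+b+c$.

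The only step needing any care is the first one, showing that $\ker\alpha\times 1$ lies in $H_0$. Normal generation of $\ker\alpha$ takes place in $A$, while $H_0$ only contains twisted lifts of the elements of $A$. The conjugation identity above resolves this, because the second coordinate of $(k_l,1)$ is trivial and so is unaffected by the $B$-component $b'$ of the conjugating element. Everything else is routine bookkeeping.
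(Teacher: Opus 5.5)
Your proof is correct, and it is essentially the argument the paper relies on. The paper observes that $A\times_Q B$ is a subdirect product with $(A\times\{1\})\cap(A\times_Q B)=\ker\alpha$, then cites Minasyan's Lemma~2.2(a), whose proof builds the same kind of generating set you write out: lifts of generators of $A$ and of $B$, plus the normal generators of $\ker\alpha$ in the first factor.
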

This lemma is essentially the same 
as \cite[Lemma~2.2(a)]{Mi}. Since his notation 
is slightly different, we give an explanation. 
\begin{proof}
 In the notation of \cite[Lemma~2.2(a)]{Mi}, the fibre product
$A\times_Q B$ is a subdirect product of $A\times B$, since both
$\alpha$ and $\beta$ are epimorphisms. Moreover,
\[
A\cap(A\times_Q B)=\ker\alpha,
\]
where $A$ is identified with $A\times\{1\}$.

Thus Minasyan's subgroup $N_1$ is  $\ker\alpha$ in our
setting. The proof of \cite[Lemma~2.2(a)]{Mi} explicitly constructs a
generating set for the subdirect product consisting of at most
$a+b+c$ elements. Hence,
$
d(A\times_Q B)\le a+b+c.
$

\end{proof}

\begin{lemma}[Changing generating sets]\label{marking}
Suppose a group $Q$ has a presentation with $t$ generators and $c$ relators. 
For any finite generating set \(\{x_1,\ldots,x_s\}\) of \(Q\), it has a presentation on that set with at most \(c+s\) relators.
\end{lemma}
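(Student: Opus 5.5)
The plan is to pass between the two generating sets by Tietze transformations. Let $\langle y_1,\dots,y_t \mid r_1,\dots,r_c\rangle$ be the given presentation of $Q$, and let $\{x_1,\dots,x_s\}$ be any generating set of $Q$. Since the $x_i$ generate $Q$, choose words $w_j$ in the $x_i$ with $y_j=w_j(x)$ in $Q$ for $j=1,\dots,t$. Since the $y_j$ generate $Q$, choose words $v_i$ in the $y_j$ with $x_i=v_i(y)$ in $Q$ for $i=1,\dots,s$.

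Next, enlarge the generating set. Add the letters $x_i$ to the original presentation together with the defining relations $x_i=v_i(y)$. This is a Tietze transformation, so
\[
Q\cong\langle y_1,\dots,y_t,x_1,\dots,x_s \mid r_1,\dots,r_c,\ x_i v_i(y)^{-1}\ (1\le i\le s)\rangle .
\]
The relations $y_j=w_j(x)$ hold in $Q$, so they are consequences of these relators. Adding them as further relators gives another presentation of $Q$.

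Now eliminate the $y_j$, using the relations $y_j=w_j(x)$. Substitute $w_j(x)$ for each $y_j$ in every other relator and then delete the $y_j$ and their defining relations. This is again a Tietze move, and it yields
\[
Q\cong\langle x_1,\dots,x_s \mid r_k(w_1(x),\dots,w_t(x))\ (1\le k\le c),\ x_i\,v_i(w(x))^{-1}\ (1\le i\le s)\rangle ,
\]
which has exactly $c+s$ relators.

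The only step that needs care is checking that this elimination is legitimate. A generator may be deleted via a relation $y_j=w_j(x)$ only when the right-hand side does not involve $y_j$, and here each $w_j$ is a word in the $x$-letters alone, so this holds. It also matters that all $t$ relators $y_j=w_j(x)$ are discarded in the elimination, so none of them contributes to the final count. Alternatively, one can verify directly that the map from the displayed group onto $Q$ defined by $x_i\mapsto x_i$ is an isomorphism. Its inverse sends $y_j\mapsto w_j(x)$; this is well defined because the $r_k$-relators hold. The two compositions are the identity: one by the relators $x_i=v_i(w(x))$, the other because $y_j=w_j(x)$ in $Q$.
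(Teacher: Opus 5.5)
Your proposal is correct and takes the same route as the paper's proof: adjoin the $x_j$ with defining relations $x_j=v_j(y)$, add the consequences $y_i=w_i(x)$, and eliminate the $y_i$, which gives the same $c+s$ relators $R_i(w(x))$ and $x_j\,v_j(w(x))^{-1}$. Your alternative check, via the two mutually inverse homomorphisms, is also valid and simply spells out why the Tietze moves preserve the group.
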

\begin{proof}
This is a standard consequence of Tietze transformations. To be precise, write \(Q=\langle y_1,\ldots,y_t\mid R_1,\ldots,R_c\rangle\). Choose words \(w_i(x)\) representing \(y_i\) and \(v_j(y)\) representing \(x_j\). Add the generators \(x_j\) with defining relations \(x_j=v_j(y)\). The relations \(y_i=w_i(x)\) are then consequences, so we may add them and then eliminate the \(y_i\). This gives a presentation with \(c+s\) relators:
$$ Q=\left\langle x_1,\ldots,x_s\ \middle|\ R_i(w(x))\ (1\le i\le c),\ x_j=v_j(w(x))\ (1\le j\le s) \right\rangle. $$
\end{proof}

\section{Proof of Theorem \ref{main}}

We now prove Theorem~\ref{main}, assuming Lemma~\ref{finite-input}.
The proof of Lemma~\ref{finite-input} is given in Section~\ref{finiteproof}.

\begin{proof}
For $(a,b)\in A\times B$, the product metric on $X$ gives
\begin{equation*}
 \ell_X(a,b)^2=\ell_{\mathbb H^2}(a)^2+\ell_{\mathbb H^2}(b)^2.
\end{equation*}

Fix $R>0$.
Since \(S\) is compact, each of \(A\) and \(B\) has only finitely many conjugacy classes of nonidentity elements with translation length at most \(R\).
Choose finite sets \(E_A,E_B\) of representatives of all conjugacy classes of nonidentity elements with translation length at most \(R\).

Apply  Lemma~\ref{finite-input} to \(E_A\) and \(E_B\), obtaining \(Q,\alpha,\beta\),   and put
$$H_R=A\times_{Q}B.$$ 

We first show that
$
d(H_R)\le 15.
$
Since $S$ has genus $2$, let $a_1,\ldots,a_4$ be the standard
generators of $A$. Then
$
\{\alpha(a_1),\ldots,\alpha(a_4)\}
$
is a generating set of $Q$. By Lemma~\ref{finite-input}(2), $Q$ has a
presentation with at most three relators. 
Therefore, by Lemma~\ref{marking}, $Q$ has a presentation with
generators $\alpha(a_1),\ldots,\alpha(a_4)$ and relators
$R_1,\ldots,R_m$, where $m\le 7$.

Let
$
F_4=\langle x_1,\ldots,x_4\rangle
$
be the free group on four generators, and define
\[
\varphi:F_4\twoheadrightarrow Q,
\qquad
\varphi(x_i)=\alpha(a_i).
\]
Regarding $R_1,\ldots,R_m$ as words in $F_4$, we have
\[
\ker\varphi
=\langle\!\langle R_1,\ldots,R_m\rangle\!\rangle_{F_4}.
\]

Now define
\[
\pi:F_4\twoheadrightarrow A,
\qquad
\pi(x_i)=a_i.
\]
Then
$
\varphi=\alpha\pi.
$
Since $\pi$ is surjective and $\varphi=\alpha\pi$, we have
$
\pi(\ker\varphi)=\ker\alpha.
$

Since $\ker\varphi$ is normally generated in $F_4$ by
$R_1,\ldots,R_m$, it follows that $\ker\alpha$ is normally generated
in $A$ by
$
\pi(R_1),\ldots,\pi(R_m).
$
Thus $\ker\alpha <A$ is normally generated by at most seven elements in $A$.

Finally, since both $A$ and $B$ are generated by four elements,
Lemma~\ref{fiber-rank} gives
$
d(H_R)\le 4+4+7=15.
$

\bigskip

We show 
 that if $1\neq(a,b)\in H_R$, then $\ell_X(a,b)> R$.
Suppose not.
Then,  $\ell_{\mathbb H^2}(a) \le R$
and $\ell_{\mathbb H^2}(b) \le R$. 
So, choose $u\in E_A\cup\{1\}$ conjugate to $a$ in $A$; and
$v\in E_B\cup\{1\}$ conjugate to $b$ in $B$.
Write $a=sus^{-1}$ and $b=tvt^{-1}$. 
Since $(a,b) \in H_R$, $\alpha(a)=\beta(b)$.
This gives
\[
\alpha(u)=\bigl(\alpha(s)^{-1}\beta(t)\bigr)\,
\beta(v)\,\bigl(\alpha(s)^{-1}\beta(t)\bigr)^{-1}.
\]
Thus $\alpha(u)$ and $\beta(v)$ are conjugate in $Q$. 
By the property (1) in \ref{finite-input}, 
we have $u=v=1$.
But this implies $a=b=1$, contradicting $(a,b)\not=1$.

Since \(M_R=H_R\backslash X\) is compact, its systole is attained and
$
\operatorname{sys}(M_R)=\operatorname{disp}_X(H_R).
$
Thus, 
$
\operatorname{sys}(M_R)=\operatorname{disp}_X(H_R)>R.
$

The quotient \(M_R\) is a finite cover of \(S\times S\), hence a closed aspherical \(4\)-manifold with universal cover \(X=\mathbb H^2\times\mathbb H^2\).
Since \(M_R\) is a closed aspherical \(4\)-manifold, $\cdim_K H_R=4$.
This completes the proof. 
\end{proof}
We note that \(\ker\alpha\times\ker\beta\) has finite index in \(H_R\), so \(H_R\) is reducible.

\section{Proof of Lemma \ref{finite-input}}\label{finiteproof}

The proof uses the finite groups \(\mathrm{PSL}_2(\mathbb F_p)\).

\begin{proof}
Let
$
Q=\operatorname{PSL}_2(\mathbb F_p),
$
where the prime $p>3$ will be chosen below.

As a consequence of the full residual freeness of surface groups, going back to
Baumslag (see \cite[Corollary~2.2]{BGSS}), for every genus-two surface group
$\Gamma$ and every finite subset $E\subset \Gamma\setminus\{1\}$ there is an
epimorphism,
$
r:\Gamma\twoheadrightarrow F_2
$,
which is nontrivial on every element of $E$.

Applying this to $A$ and $B$, we obtain epimorphisms
\[
r_A:A\twoheadrightarrow F_2,
\qquad
r_B:B\twoheadrightarrow F_2
\]
such that
\[
r_A(u)\ne1\quad(u\in E_A),
\qquad
r_B(v)\ne1\quad(v\in E_B).
\]

Write $F_2=\langle x_1,x_2\rangle$. For a word $w\in F_2$ and
${\mathbf X}=(X_1,X_2)\in\mathrm{SL}_2(\mathbb R)^2$, define
\[
T_w(\mathbf X)=\operatorname{tr}(w(X_1,X_2))^2.
\]
The function $T_w$ is a polynomial in the matrix entries of $X_1$ and $X_2$, with integer
coefficients. Indeed, for matrices of determinant one, inversion is
polynomial in the matrix entries.

If $w\ne1$, then $T_w$ is not identically equal to $4$. To see this,
choose a Schottky pair $X_1,X_2\in\mathrm{SL}_2(\mathbb R)$. Then every
nontrivial word $w(X_1,X_2)$ projects to a hyperbolic element of
$\mathrm{PSL}_2(\mathbb R)$, and hence $T_w(\mathbf X)>4$.

For each $u\in E_A\cup\{1\}$ and $v\in E_B\cup\{1\}$ with
$(u,v)\ne(1,1)$, define
\[
F_{u,v}(\mathbf X,\mathbf Y)=T_{r_A(u)}(\mathbf X)-T_{r_B(v)}(\mathbf Y),
\]
where $\mathbf X=(X_1,X_2)$ and $\mathbf Y=(Y_1,Y_2)$ are in 
$\operatorname{SL}_2(\mathbb R)^2$.
For the identity word, $T_1(\mathbf X)=4$ for every $\mathbf X$.
The polynomials \(F_{u,v}\) will be used to rule out conjugacy by comparing squared traces.

Each $F_{u,v}$ is a nonzero polynomial function in the matrix entries
of $X_1,X_2,Y_1,Y_2$, with integer coefficients.
Indeed, suppose first that $u\ne1$. Choose $\mathbf X=(X_1,X_2)$ to be a Schottky pair
and put $\mathbf Y=(I,I)$. Since $r_A(u)\ne1$, we have
$
T_{r_A(u)}(\mathbf X)>4,
$ by the previous paragraph, while  $T_{r_B(v)}(\mathbf Y)=4$. Hence
$
F_{u,v}(\mathbf X,\mathbf Y)\ne0.
$
If $u=1$, then $v\ne1$, and the same argument with
$\mathbf X=(I,I)$ and $\mathbf Y$ a Schottky pair gives
$F_{u,v}(\mathbf X,\mathbf Y)\ne0$.

\bigskip

We next show that, for all sufficiently large primes $p$, one can
choose four matrices $X_1,X_2,Y_1,Y_2$ in $\mathrm{SL}_2(\mathbb F_p)$ for which
$
F_{u,v}(\mathbf X,\mathbf Y)\ne0
$
for every $u\in E_A\cup\{1\}$ and $v\in E_B\cup\{1\}$ with
$(u,v)\ne(1,1)$.

On the set of matrices in $\mathrm{SL}_2$ whose upper-left entry is
nonzero, write
\[
M(x,y,z)=
\begin{pmatrix}
x&y\\
z&(1+yz)/x
\end{pmatrix},
\qquad x\ne0.
\]
Thus four such matrices are parametrized by twelve variables. 
To make all these nonvanishing conditions hold simultaneously, take the product of the finitely many polynomials \(F_{u,v}\), and substitute the above expressions into this product.

Since this chart is a dense open subset of $\mathrm{SL}_2^4$, none
of the $F_{u,v}$ is identically zero on the chart. Hence their
product is not identically zero there. After substitution, this product is a rational
function whose denominator is a monomial in the four upper-left
entries. Since these entries are nonzero on the chart, multiplying by
the denominator does not change the zero set there. We therefore
obtain a nonzero polynomial
\[
N\in\mathbb Z[z_1,\ldots,z_{12}].
\]

Let $D$ be the total degree of $N$. For all but finitely many primes
$p$, the reduction $\overline N$ of $N$ modulo $p$ is nonzero.
For every such prime $p$, the polynomial $\overline N$ has at most
$
Dp^{11}
$
zeros in $\mathbb F_p^{12}$.

Now,
$
|\mathrm{SL}_2(\mathbb F_p)|=p(p^2-1),
$
and the proportion of matrices outside the above chart is $1/(p+1)$.
Indeed, if
$
\begin{pmatrix}
a & b\\
c & d
\end{pmatrix}
\in \operatorname{SL}_2(\mathbb F_p)
$
has $a=0$, then $-bc=1$. Hence $b$ has $p-1$ possible values,
$c$ is then uniquely determined, and $d$ is arbitrary. Thus there are
$p(p-1)$ such matrices.

Therefore, if four matrices are chosen independently and uniformly from
$\mathrm{SL}_2(\mathbb F_p)$, the probability that at least one of them
lies outside the chart,  or that the resulting value of $\overline N$
is zero, is at most
\[
\frac{4}{p+1}
+
\frac{Dp^{11}}{|\mathrm{SL}_2(\mathbb F_p)|^4}.
\]
This tends to zero as $p\to\infty$, since
$|\mathrm{SL}_2(\mathbb F_p)|=p(p^2-1)$.
It follows that the probability that
$
F_{u,v}(\mathbf X,\mathbf Y)\ne 0
$
for every $u\in E_A\cup\{1\}$ and $v\in E_B\cup\{1\}$ with
$(u,v)\ne(1,1)$ tends to one as $p\to\infty$.

\bigskip

By \cite[Theorem~1.1]{EV}, the probability that a random pair of
elements of $\operatorname{SL}_2(\mathbb F_p)$ generates the group
tends to one as $p\to\infty$. This result was proved earlier by Kantor and Lubotzky \cite{KL}.
Hence the probability that both pairs
$\mathbf X$ and $\mathbf Y$ generate $\operatorname{SL}_2(\mathbb F_p)$ also tends to one.

Therefore, for all sufficiently large primes $p$, there exist pairs
$
\mathbf X=(X_1,X_2)$ and $\mathbf Y=(Y_1,Y_2)
$
in $\mathrm{SL}_2(\mathbb F_p)^2$ such that
\[
\langle X_1,X_2\rangle=\mathrm{SL}_2(\mathbb F_p),
\qquad
\langle Y_1,Y_2 \rangle=\mathrm{SL}_2(\mathbb F_p),
\]
and
$
F_{u,v}(\mathbf X,\mathbf Y)\ne0
$
for every $u\in E_A\cup\{1\}$ and $v\in E_B\cup\{1\}$ with
$(u,v)\ne(1,1)$.

Fix such a prime $p$ and such pairs $\mathbf X$ and $\mathbf Y$.
We define epimorphisms
\[
e_{\mathbf X},e_{\mathbf Y}:F_2\twoheadrightarrow \mathrm{SL}_2(\mathbb F_p)
\]
by
\[
e_{\mathbf X}(x_i)=X_i,\qquad e_{\mathbf Y}(x_i)=Y_i
\quad (i=1,2).
\]

Let
\[
\rho:\operatorname{SL}_2(\mathbb F_p)\twoheadrightarrow
\operatorname{PSL}_2(\mathbb F_p)
=\operatorname{SL}_2(\mathbb F_p)/\{\pm I\}
\]
be the quotient map.
Define epimorphisms
\[
\alpha=\rho\circ e_{\mathbf X}\circ r_A,
\qquad
\beta=\rho\circ e_{\mathbf Y}\circ r_B.
\]

We now prove property~{\rm(1)}. Let
$u\in E_A\cup\{1\}$ and $v\in E_B\cup\{1\}$, with
$(u,v)\ne(1,1)$. Put
\[
U=e_{\mathbf X}(r_A(u)),\qquad V=e_{\mathbf Y}(r_B(v)).
\]
By the definition of $e_{\mathbf X}$,
\[
r_A(u)(X_1,X_2)=e_{\mathbf X}(r_A(u))=U.
\]
Hence, by the definition of $T_w$,
we have $
T_{r_A(u)}(\mathbf X)=\operatorname{tr}(U)^2.
$
Similarly,
$
T_{r_B(v)}(\mathbf Y)=\operatorname{tr}(V)^2.
$

We show that if  $\alpha(u)$ and $\beta(v)$ are conjugate in
$\operatorname{PSL}_2(\mathbb F_p)$,
then $F_{u,v}(\mathbf X,\mathbf Y)=0$.
We note, as a side remark, that the trace argument below is valid for arbitrary \(u\in A\) and \(v\in B\), and does not use the special choice of \(\mathbf X\) and \(\mathbf Y\).

Indeed, since
$
\operatorname{PSL}_2(\mathbb F_p)
=\operatorname{SL}_2(\mathbb F_p)/\{\pm I\},
$
there is $C\in\operatorname{SL}_2(\mathbb F_p)$ such that
$
U=\pm CVC^{-1}.
$
Hence
$
\operatorname{tr}(U)^2=\operatorname{tr}(V)^2.
$
Therefore, by the identities above, we have
\[
F_{u,v}(\mathbf X,\mathbf Y)
=
\operatorname{tr}(U)^2-\operatorname{tr}(V)^2
 =0
\]

On the other hand, by the choice of $\mathbf X$ and $\mathbf Y$,
$
F_{u,v}(\mathbf X,\mathbf Y)\ne0.
$
This is a contradiction. Thus $\alpha(u)$ and $\beta(v)$ are not
conjugate.  This proves
property~{\rm(1)}.

\bigskip
Finally, by \cite[(1.4)]{Sunday},
$\mathrm{PSL}_2(\mathbb Z/m\mathbb Z)$ admits a presentation with
three relators for every odd $m$. Therefore $\mathrm{PSL}_2(\mathbb F_p)$ has a presentation with three relators. 
This proves property~{\rm(2)} and completes the proof.
\end{proof}

\section{Higher fibre products}
We discuss  the fibre product construction for more than two factors. 
Throughout this section, $S$ is the closed hyperbolic surface of
genus two used above, and products are equipped with the product
metric.

\begin{theorem}\label{prop:higher-products}
For every integer $q\ge2$ and every $R>0$, there exists a
finite-index subgroup
$
        H_R^{(q)}<\pi_1(S)^q
$
such that
\[
 d\bigl(H_R^{(q)}\bigr)\le 11q-7,
 \qquad
 \operatorname{disp}_{(\mathbb H^2)^q}
       \bigl(H_R^{(q)}\bigr)>R.
\]

The corresponding connected finite cover
$
        M_R^{(q)}
        =H_R^{(q)}\backslash(\mathbb H^2)^q
$
of $S^q$ is a closed locally symmetric manifold of dimension
$2q$, with universal cover $(\mathbb H^2)^q$ of real rank $q$.
Also,
$
        \operatorname{inj}\bigl(M_R^{(q)}\bigr)>R/2.
$
\end{theorem}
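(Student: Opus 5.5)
The plan is to iterate the two-factor construction: use one finite quotient $Q=\mathrm{PSL}_2(\mathbb F_p)$ for all $q$ factors at once, and bound the rank one factor at a time with Lemma~\ref{fiber-rank}. Write $A_1,\dots,A_q$ for the $q$ copies of $\pi_1(S)$. Let $E\subset\pi_1(S)\setminus\{1\}$ be a set of representatives of the nontrivial conjugacy classes with translation length at most $R$; it is finite because $S$ is compact. I would first prove a $q$-fold version of Lemma~\ref{finite-input}: there are epimorphisms $\alpha_i:A_i\twoheadrightarrow Q$ such that, for every $i\neq j$ and all $u,v\in E\cup\{1\}$ with $(u,v)\neq(1,1)$, the elements $\alpha_i(u)$ and $\alpha_j(v)$ are not conjugate in $Q$. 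Then I would set
\[
H^{(q)}_R=\{(g_1,\dots,g_q):\alpha_1(g_1)=\cdots=\alpha_q(g_q)\}.
\]
This has index $|Q|^{q-1}$ in $\pi_1(S)^q$.

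The $q$-fold lemma is proved exactly as in Section~\ref{finiteproof}. Fix one epimorphism $r:\pi_1(S)\twoheadrightarrow F_2$ that is nontrivial on all of $E$. Choose $q$ pairs $\mathbf X^{(1)},\dots,\mathbf X^{(q)}\in\mathrm{SL}_2(\mathbb F_p)^2$, that is, $4q$ matrices, parametrized on the chart by $12q$ variables. Take the product over $i<j$ and over admissible $(u,v)$ of the polynomials
\[
T_{r(u)}(\mathbf X^{(i)})-T_{r(v)}(\mathbf X^{(j)}).
\]
Each factor is nonzero over $\mathbb R$: put a Schottky pair in slot $i$ (or in slot $j$ if $u=1$) and identities elsewhere. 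The same counting argument then shows the following for a random choice of the $4q$ matrices, as $p\to\infty$:
\begin{itemize}
\item with probability tending to one, all chart and nonvanishing conditions hold;
\item by \cite{EV}, with probability tending to one, each of the $q$ pairs generates $\mathrm{SL}_2(\mathbb F_p)$.
\end{itemize}
The resulting $\alpha_i=\rho\circ e_{\mathbf X^{(i)}}\circ r$ satisfy the required non-conjugacy, by the same $\pm$-trace argument as before.

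For displacement, take $1\ne g=(g_1,\dots,g_q)\in H^{(q)}_R$ with $\ell(g)\le R$. Since $\ell(g)^2=\sum_i\ell(g_i)^2$, every $\ell(g_i)\le R$. Choose $i$ with $g_i\neq1$ and any $j\neq i$, and conjugate $g_i,g_j$ to representatives $u,v\in E\cup\{1\}$. From $\alpha_i(g_i)=\alpha_j(g_j)$ we get that $\alpha_i(u)$ and $\alpha_j(v)$ are conjugate, which is a contradiction. Hence $\operatorname{disp}>R$. Compactness then gives $\operatorname{inj}=\tfrac12\operatorname{sys}>R/2$, and the manifold statements are as in the proof of Theorem~\ref{main}.

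For the rank I would use induction on $k$, with $H^{(k)}\subset A_1\times\cdots\times A_k$ defined by the same equalities. For $k\ge3$,
\[
H^{(k)}=A_k\times_Q H^{(k-1)}
\]
via $\alpha_k$ and the epimorphism $H^{(k-1)}\to Q$, $(g_1,\dots,g_{k-1})\mapsto\alpha_1(g_1)$. This map is onto because $H^{(k-1)}$ contains $(g,\dots,g)$-type tuples lifting any element of $Q$.

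In Lemma~\ref{fiber-rank} I would place $A_k$ in the role of $A$. Its kernel $\ker\alpha_k$ is normally generated by at most $7$ elements, by the Tietze argument of Lemma~\ref{marking} together with the three-relator presentation of $Q$, exactly as in the proof of Theorem~\ref{main}. The lemma then gives
\[
d(H^{(k)})\le 4+7+d(H^{(k-1)}).
\]
Starting from $d(H^{(2)})\le15$ this yields $d(H^{(q)})\le 15+11(q-2)=11q-7$.

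I expect the main obstacle to be this asymmetric use of Lemma~\ref{fiber-rank}. We need the normal-generation hypothesis only on the surface-group side, where it is available, and never for the map $H^{(k-1)}\to Q$, whose kernel is not controlled. The other point to watch is that the nonvanishing and generation conditions must hold simultaneously for all pairs $i<j$; the counting handles this because $q$ is fixed.
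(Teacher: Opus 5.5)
Your proposal is correct and follows the paper's proof. The paper likewise chooses $q$ generating pairs in $\mathrm{SL}_2(\mathbb F_p)$ simultaneously via the squared-trace polynomials, forms the $q$-fold fibre product, and runs the induction $d(H^{(k)})\le 4+7+d(H^{(k-1)})$ with Lemma~\ref{fiber-rank} applied with the surface group in the role of $A$; the paper starts from $d(H^{(1)})=4$ instead of $d(H^{(2)})\le 15$, which gives the same bound. Two small slips to fix: $q$ pairs means $2q$ matrices and $6q$ chart variables, not $4q$ and $12q$; and since the $\alpha_i$ differ, surjectivity of $H^{(k-1)}\to Q$ should be argued by choosing, for each $x\in Q$, separate lifts $g_i\in\alpha_i^{-1}(x)$ rather than diagonal tuples $(g,\dots,g)$.
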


\begin{proof}
Let $A_1,\ldots,A_q$ be copies of $\pi_1(S)$. For each $i$, let
$E_i\subset A_i\setminus\{1\}$ contain one representative of every
nontrivial conjugacy class of translation length at most $R$.

The proof of Lemma~\ref{finite-input} applies simultaneously
to these $q$ groups. It gives a finite group
$Q=\operatorname{PSL}_2(\mathbb F_p)$ and epimorphisms
\[
        \alpha_i:A_i\twoheadrightarrow Q
        \qquad (1\le i\le q)
\]
with the following property: for every $i\ne j$ and every
\[
 u_i\in E_i\cup\{1\},\qquad
 u_j\in E_j\cup\{1\},\qquad
 (u_i,u_j)\ne(1,1),
\]
the elements $\alpha_i(u_i)$ and $\alpha_j(u_j)$ are not conjugate
in $Q$.

As in the proof of Lemma~\ref{finite-input}, for each
$i$ choose an epimorphism
$
        r_i:A_i\twoheadrightarrow F_2
$
which is nontrivial on every element of $E_i$.

For each $1\le i\le q$, introduce a pair of matrix
variables
\[
 \mathbf X_i=(X_{i,1},X_{i,2})
 \in\operatorname{SL}_2(\mathbb F_p)^2.
\]
For $i\ne j$ and
\[
 u_i\in E_i\cup\{1\},\qquad
 u_j\in E_j\cup\{1\},\qquad
 (u_i,u_j)\ne(1,1),
\]
consider the polynomial
\[
 F_{i,j;u_i,u_j}(\mathbf X_i,\mathbf X_j)
 =
 T_{r_i(u_i)}(\mathbf X_i)
 -
 T_{r_j(u_j)}(\mathbf X_j).
\]
%These are the pairwise squared-trace polynomials used in the proof of Lemma~\ref{finite-input}.

Each of the resulting finitely many polynomials is nonzero, by the
same argument as before.
For each of the $2q$ matrix variables, use the  chart
introduced in the proof of Lemma~\ref{finite-input},
on which the upper-left matrix entry is nonzero. Thus the
$2q$ matrices are parametrized by $6q$ scalar variables, and
the probability that at least one matrix lies outside this
chart is at most
$
        2q/(p+1).
$

Let $D$ be the total degree of the product of these polynomials
after restricting to the charts and clearing denominators,
 and let $\varepsilon_p$ denote the
probability that a random pair in
$\operatorname{SL}_2(\mathbb F_p)$ fails to generate the group.

A union bound shows that the probability that at least one
matrix lies outside the chart, at least one of these polynomials
vanishes, or at least one of the $q$ pairs fails to generate
$\operatorname{SL}_2(\mathbb F_p)$ is at most
\[
 \frac{2q}{p+1}
 +
 \frac{D p^{6q-1}}
      {|\operatorname{SL}_2(\mathbb F_p)|^{2q}}
 +
 q\varepsilon_p.
\]

For fixed $q$, this tends to zero as $p$ tends to infinity,
since
$|\operatorname{SL}_2(\mathbb F_p)|=p(p^2-1)$ and
$\varepsilon_p\to0$ by \cite[Theorem~1.1]{EV}.

Choose a sufficiently large prime $p$ for which this failure
probability is less than one, and choose pairs
\[
        \mathbf X_i=(X_{i,1},X_{i,2})
        \qquad (1\le i\le q)
\]
lying in the chosen charts and satisfying all the required
nonvanishing and generation conditions. Let
$
        e_i:F_2\twoheadrightarrow
        \operatorname{SL}_2(\mathbb F_p)
$
be defined by $e_i(x_j)=X_{i,j}$, let
\[
        \rho:\operatorname{SL}_2(\mathbb F_p)
        \twoheadrightarrow\operatorname{PSL}_2(\mathbb F_p)
\]
be the quotient map, and set
\[
        Q=\operatorname{PSL}_2(\mathbb F_p),
        \qquad
        \alpha_i=\rho\circ e_i\circ r_i.
\]

The group $Q$ admits a presentation with at most three relators.
For each $i$, apply the argument from the proof of
Theorem~\ref{main} to the epimorphism $\alpha_i:A_i\twoheadrightarrow Q$.
Using Lemma~\ref{marking}, we conclude that $\ker\alpha_i$ is normally
generated in $A_i$ by at most seven elements.

\bigskip

Set
\[
 H_R^{(q)}
 =
 \left\{
 (g_1,\ldots,g_q)\in A_1\times\cdots\times A_q:
 \alpha_1(g_1)=\cdots=\alpha_q(g_q)
 \right\}.
\]
This subgroup has finite index $|Q|^{q-1}$.

Suppose that a nonidentity element $(g_1,\ldots,g_q)$ of this
subgroup has translation length at most $R$. Then, each $g_i$ has
translation length at most $R$, so it is conjugate in $A_i$ to
some $u_i\in E_i\cup\{1\}$. The elements $\alpha_i(u_i)$ are all
conjugate in $Q$.

Since $(g_1,\ldots,g_q)$ is nontrivial, there is an index $i$
such that $u_i\ne1$. Choose any $j\ne i$. Then
$(u_i,u_j)\ne(1,1)$, while $\alpha_i(u_i)$ and
$\alpha_j(u_j)$ are conjugate in $Q$. This contradicts the
pairwise nonconjugacy condition.
Therefore, every nonidentity element in $H_R^{(q)}$ has translation length
greater than $R$. Compactness of the quotient gives the asserted
strict bound on displacement.

To bound the number of generators of $H_R^{(q)}$, define, for $1\le j\le q$,
\[
 H^{(j)}
 =
 \left\{
 (g_1,\ldots,g_j):
 \alpha_1(g_1)=\cdots=\alpha_j(g_j)
 \right\}.
\]
Here $H^{(1)}=A_1$.
The homomorphism
\[
 \delta_j:H^{(j)}\longrightarrow Q,\qquad
 \delta_j(g_1,\ldots,g_j)=\alpha_1(g_1)=\cdots=\alpha_j(g_j),
\]
is surjective, since all the maps $\alpha_i$ are surjective.

For $j\ge2$, we may view $H^{(j)}$ as the fibre product
 $A_j\times_Q H^{(j-1)}$ with respect to
\[
        \alpha_j:A_j\twoheadrightarrow Q
        \quad\text{and}\quad
        \delta_{j-1}:H^{(j-1)}\twoheadrightarrow Q.
\]

Applying Lemma~\ref{fiber-rank} with $A=A_j$ and
$B=H^{(j-1)}$, and using the fact that $\ker\alpha_j$ is
normally generated by at most seven elements, we have 
\[
        d(H^{(j)})\le4+d(H^{(j-1)})+7.
\]

Since $d(H^{(1)})=4$, induction yields
\[
        d(H_R^{(q)})\le4+11(q-1)=11q-7.
\]

Since $H_R^{(q)}$ has finite index in $\pi_1(S)^q$, the quotient
$
        M_R^{(q)}
        =H_R^{(q)}\backslash(\mathbb H^2)^q
$
is a connected finite cover of $S^q$. In particular, it is a
closed locally symmetric manifold of dimension $2q$, with
universal cover $(\mathbb H^2)^q$ of real rank $q$. Since
$(\mathbb H^2)^q$ is a Hadamard manifold,
\[
 \operatorname{inj}\bigl(M_R^{(q)}\bigr)
 =
 \frac12\operatorname{disp}_{(\mathbb H^2)^q}
          \bigl(H_R^{(q)}\bigr)
 >
 R/2
\]
\end{proof}

\begin{remark}

For each fixed $q\ge3$, we expect that $M_R^{(q)}$ admits a finite
CW structure with at most $d(H_R^{(q)})$ one-cells, and hence at
most $11q-7$ one-cells, while
$\operatorname{disp}_{(\mathbb H^2)^q}(H_R^{(q)})\to\infty$
as $R\to\infty$.
A possible approach is to use one-handle elimination
(cf.\ \cite[Lemma~6.15]{RS}), since $\dim M_R^{(q)}=2q\ge6$,
and then obtain the required CW decomposition from a Morse--Smale
flow. We do not pursue this here.

In higher rank, these examples suggest
that one may choose finite CW structures with a uniformly bounded number of one-cells, 
while the
displacement tends to infinity.
In contrast, Avramidi and Delzant \cite[Theorem~39(2)]{AD1}
show that if a group $G$ acts isometrically on a $1$-hyperbolic
space $X$ with
$\operatorname{disp}_X(G)>100\log_2((n+1)!)$, where $n$ is a
positive integer, then every aspherical CW complex with fundamental
group $G$ has more than $n$ cells in each dimension
$0<k<\operatorname{cd}(G)$.

\end{remark}

\section*{Appendix: Generator bounds of fibre products}
\label{appendix}

Although we do not need it for the proof of Theorem~\ref{main},
we record a sharper upper bound on $d(A\times_Q B)$ than the bound
 \(2a+b+c\) that follows directly from that proof. The argument is more involved, but the estimate may be useful elsewhere.

\begin{lemma}[abc lemma]
\label{lemma-abc}
Let $\alpha:A\twoheadrightarrow Q$ and
$\beta:B\twoheadrightarrow Q$ be epimorphisms.
Suppose that $d(A)=a$ and $d(B)=b$, and that $Q$ admits a finite
presentation with $c$ relators. Then
\[
d(A\times_Q B)\le a+b+c.
\]
\end{lemma}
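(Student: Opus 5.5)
The plan is to refine the proof of Lemma~\ref{fiber-rank} by choosing the lifts of the generators in a way adapted to the given presentation of $Q$. The aim is to avoid the $a$ extra relators that Lemma~\ref{marking} would introduce. Fix generators $a_1,\ldots,a_a$ of $A$ and $b_1,\ldots,b_b$ of $B$, and a presentation $Q=\langle y_1,\ldots,y_t\mid R_1,\ldots,R_c\rangle$. Since $\alpha$ is onto, choose words $w_k$ with $\alpha(w_k(a_1,\ldots,a_a))=y_k$ and words $v_i$ in the $y$'s with $v_i(y)=\alpha(a_i)$ in $Q$. Since $\beta$ is onto, choose lifts $\tilde\beta_k\in B$ with $\beta(\tilde\beta_k)=y_k$. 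Then put $b'_i:=v_i(\tilde\beta)\in B$, so that $\beta(b'_i)=\alpha(a_i)$ and $z_i:=(a_i,b'_i)\in H:=A\times_Q B$.

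Next I define the other generators. For each $j$, since $\beta(b_j)\in Q$ is a word in the $\alpha(a_i)$, choose a word $u_j$ with $\beta(b_j)=\alpha(u_j(a))$. Put
\[
e_j:=\bigl(1,\ b_j\,u_j(b')^{-1}\bigr)\in H ,\qquad f_k:=\bigl(1,\ R_k(\tilde\beta)\bigr)\in H .
\]
Both lie in $\{1\}\times\ker\beta$, because $\beta(b_j)=\beta(u_j(b'))$ and $R_k(y)=1$ in $Q$. The candidate generating set is $\{z_i\}\cup\{e_j\}\cup\{f_k\}$, which has $a+b+c$ elements.

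The key step, which I expect to be the main obstacle, is to show that $\ker\beta$ is the normal closure $N$ in $B$ of the $b+c$ elements $b_ju_j(b')^{-1}$ and $R_k(\tilde\beta)$. Clearly $N\subseteq\ker\beta$. In $B/N$ each $b_j$ equals $u_j(b')=u_j(v(\tilde\beta))$, so $B/N$ is generated by the images of $\tilde\beta_1,\ldots,\tilde\beta_t$. These images satisfy the relators $R_1,\ldots,R_c$, so $y_k\mapsto\tilde\beta_kN$ gives a surjection $Q\twoheadrightarrow B/N$. Composing it with the map $B/N\twoheadrightarrow Q$ induced by $\beta$ sends $y_k\mapsto y_k$, so the composite is the identity. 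Hence $B/N\cong Q$ via $\beta$, and therefore $N=\ker\beta$. This is exactly where the relators of the presentation of $Q$ itself are used, and not those of a presentation re-marked on the $\alpha(a_i)$.

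Finally I conclude as in Minasyan's argument. Let $K=\langle z_i,e_j,f_k\rangle\le H$. The projection $H\to B$ is onto, so conjugation by elements of $H$ realizes all conjugations by $B$ on $\{1\}\times\ker\beta$. Hence it suffices to show that $K$ surjects onto $B$ in order to get $\{1\}\times\ker\beta\subseteq K$; in the actual write-up I will verify this carefully. For the surjection, each $b_j$ is the $B$-coordinate of $e_j\,u_j(z)$, whose $A$-coordinate is $u_j(a)$. The $B$-coordinates of elements of $K$ realize every $B$-conjugation, and $K$ contains the generating set $\{f_k\}\cup\{e_j\}$ of $\{1\}\times N$ up to conjugation. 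Combining the two facts gives $\{1\}\times\ker\beta\subseteq K$. Next, the projection of $\langle z_i\rangle$ to $A$ is onto. So for any $(g,h)\in H$ there is a word $\omega$ with $\omega(z)=(g,h')$, and then $\beta(h')=\alpha(g)=\beta(h)$. Thus $(g,h)=\omega(z)\cdot(1,h'^{-1}h)\in K$, which gives $H=K$ and $d(A\times_Q B)\le a+b+c$.
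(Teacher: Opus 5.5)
Your proof is correct and is essentially the paper's argument. With the roles of $A$ and $B$ interchanged, your generating set is the paper's, except that you replace $(u_j(a),b_j)$ by $e_j=(u_j(a),b_j)\,u_j(z)^{-1}$. Your key step is the same section argument: $Q\to B/N$, $y_k\mapsto\tilde\beta_kN$, is a surjective right inverse of $\bar\beta$. The one real difference is that you work directly in $A\times_Q B$ using the words $u_j$ and $v_i$, whereas the paper first lifts to $F_A\times_Q F_B$ only in order to have a homomorphism $u\colon F_B\to F_A$ with $\phi_A\circ u=\phi_B$. Your version shows this lift is unnecessary.
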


The generating set used in the proof can also be checked directly to generate \(G\), but we give a slightly more conceptual argument.

\begin{proof}
Choose epimorphisms
$
  \pi_A:F_A\to A$ and $
  \pi_B:F_B\to B,
$
where
$
  F_A=F(x_1,\ldots,x_a)$ and $
  F_B=F(y_1,\ldots,y_b).
$
Put
\[
  \phi_A=\alpha\circ\pi_A,
  \qquad
  \phi_B=\beta\circ\pi_B.
\]
Let
\[
  G=F_A\times_Q F_B
   =\{(x,y)\in F_A\times F_B:\phi_A(x)=\phi_B(y)\}.
\]

The map \(\pi_A\times\pi_B\) restricts to an epimorphism
$
  G\longrightarrow A\times_Q B.
$
% Indeed, if \((a_0,b_0)\in A\times_Q B\), then any lifts
% \(x\in F_A\) and \(y\in F_B\) of \(a_0\) and \(b_0\), respectively,
% satisfy \(\phi_A(x)=\phi_B(y)\).  
Thus it is enough to prove that
\(G\) is generated by \(a+b+c\) elements.

Fix a presentation
\[
  Q=\langle z_1,\ldots,z_t\mid R_1,\ldots,R_c\rangle.
\]
Choose lifts \(\widehat z_k\in F_A\) with
\(\phi_A(\widehat z_k)=z_k\), for \(1\le k\le t\).  For each \(j\),
choose a word \(w_j(z_1,\ldots,z_t)\) representing \(\phi_B(y_j)\).
Since \(F_B\) is free, the assignments
\[
  u(y_j)=w_j(\widehat z_1,\ldots,\widehat z_t)
\]
define a homomorphism \(u:F_B\to F_A\) satisfying
$
  \phi_A\circ u=\phi_B.
$

For each \(i\), choose \(v_i\in F_B\) such that
\[
  \phi_B(v_i)=\phi_A(x_i).
\]

Let \(L\) be the subgroup generated by the following \(a+b+c\) elements:
\[
  (x_i,v_i) \quad (1\le i\le a),
\]
\[
  (u(y_j),y_j) \quad (1\le j\le b),
\]
\[
  \bigl(R_r(\widehat z_1,\ldots,\widehat z_t),1\bigr)
  \quad (1\le r\le c).
\]
All these elements lie in \(G\). This follows respectively from the
choice of \(v_i\), from \(\phi_A\circ u=\phi_B\), and from the fact that
\(R_r(z_1,\ldots,z_t)=1\) in \(Q\).  Hence \(L< G\).

We show that \(L=G\).  Define
\[
  N=\{x\in F_A:(x,1)\in L\}.
\]
By the definition of $G$, we have \(N\le\ker\phi_A\).  Moreover, the first
projection maps \(L\) onto \(F_A\), because \(L\) contains
\((x_i,v_i)\) for every \(i\).  Consequently, \(N\) is normal in
\(F_A\).  Indeed, given \(g\in F_A\), choose \((g,h)\in L\).  Then, for
\(n\in N\),
\[
  (g,h)(n,1)(g,h)^{-1}=(gng^{-1},1)\in L,
\]
so \(gng^{-1}\in N\).

Set
\[
  \overline F_A=F_A/N.
\]
Since \(N\le\ker\phi_A\), the homomorphism \(\phi_A\) induces an
epimorphism
\[
  \overline\phi_A:\overline F_A\longrightarrow Q.
\]

We will prove \(N=\ker\phi_A\) by showing that \(\overline \phi_A\) is an isomorphism.
For every relator \(R_r\), one of the generators of \(L\) is $(R_r(\hat z_1, \dots, \hat z_t), 1)$.
Hence by the definition of $N$, 
\[
  R_r(\widehat z_1,\ldots,\widehat z_t)\in N.
\]
Thus the elements \(\widehat z_kN\in\overline F_A\) satisfy the relators
\(R_1,\ldots,R_c\) in $\overline F_A$, and therefore define a homomorphism
\[
  s:Q\longrightarrow\overline F_A,
  \qquad
  s(z_k)=\widehat z_kN.
\]
The choice of the lifts gives
$
  \overline\phi_A\circ s=\operatorname{id}_Q.
$
Thus \(s\) is a section of \(\overline\phi_A\). 

We claim that \(s\) is
surjective. First, 
for each generator \(y_j\), we have
\[
\begin{aligned}
u(y_j)N
&=w_j(\widehat z_1,\ldots,\widehat z_t)N =w_j(\widehat z_1N,\ldots,\widehat z_tN) \\
&=s\bigl(w_j(z_1,\ldots,z_t)\bigr) =s(\phi_B(y_j)).
\end{aligned}
\]
Since both sides define homomorphisms from \(F_B\) to \(\overline F_A\), it follows that
$$
u(y)N=s(\phi_B(y))
$$
for every 
$y\in F_B$.

Also, since
$
  y\mapsto (u(y),y)
$
is a homomorphism, the second family of generators of \(L\) implies that
$
  (u(y),y)\in L
  $ for every 
  $y\in F_B.$
Hence, 
\[
  (x_i,v_i)(u(v_i),v_i)^{-1}
   =\bigl(x_i u(v_i)^{-1},1\bigr)\in L.
\]
Thus \(x_i u(v_i)^{-1}\in N\), and hence
\[
  x_iN=u(v_i)N
      =s(\phi_B(v_i))
      =s(\phi_A(x_i)).
\]
Every generator \(x_iN\) of \(\overline F_A\) therefore belongs to the
image of \(s\), so \(s\) is surjective.

Together with
\(\overline\phi_A\circ s=\operatorname{id}_Q\), this shows that \(s\)
and \(\overline\phi_A\) are inverse isomorphisms.  Hence,
$\ker\overline\phi_A=(\ker\phi_A)/N$
is trivial, and therefore \(N=\ker\phi_A\).

Finally, let \((x,y)\in G\).  We have
\[
  (x,y)=\bigl(xu(y)^{-1},1\bigr)(u(y),y).
\]
Here
\[
  \phi_A(x)=\phi_B(y)=\phi_A(u(y)),
\]
so \(xu(y)^{-1}\in\ker\phi_A=N\), and hence
\(\bigl(xu(y)^{-1},1\bigr)\in L\).  The second factor
\((u(y),y)\) also belongs to \(L\), as observed above.  Thus
\((x,y)\in L\), proving \(L=G\).  Thus,
$
  d(A\times_Q B)\le d(G)\le a+b+c.
$
\end{proof}

\medskip
\noindent{\bf Acknowledgements.}
The author  thanks the Mathematical Institute, University of Oxford, for its hospitality during the preparation of this work.
He is grateful to Grigori Avramidi, Thomas Delzant and Ashot Minasyan  for comments.

ChatGPT (GPT-5.6 Sol) was used in developing the proof of Lemma \ref{finite-input} and in polishing the proof of Lemma \ref{lemma-abc}.
 All mathematical content is the responsibility of the author.

\bigskip

\noindent
\textsc{Koji Fujiwara}\\
Okinawa Institute of Science and Technology Graduate University,
Okinawa 904-0495, Japan\\
KUIAS/RIMS, Kyoto University, Kyoto, 606-8305, Japan\\
\texttt{koji.fujiwara@oist.jp; kfujiwara@math.kyoto-u.ac.jp}

\end{document}